\theoremstyle{plain}
\newtheorem{thm}{Theorem}[section]
\newtheorem{theorem}[thm]{Theorem}
\newtheorem{corollary}[thm]{Corollary}
\newtheorem{example}[thm]{Example}
\newtheorem{case}[thm]{Special Case}
\theoremstyle{definition}
\def\@maketitle{%
  \newpage
  \null
  \vskip 2em%
  \begin{center}%
  \let \footnote \thanks
    {\Large\bfseries \@title \par}%
    \vskip 1.5em%
    {\normalsize
      \lineskip .5em%
      \begin{tabular}[t]{c}%
        \@author
      \end{tabular}\par}%
    \vskip 1em%
    {\normalsize \@date}%
  \end{center}%
  \par
  \vskip 1.5em}
\begin{document}

\title{Novel Theorems and Algorithms Relating to the Collatz Conjecture}
\author{Michael R. Schwob}
\affil{Department of Statistics and Data Sciences\\
                University of Texas at Austin\\
                Austin, TX, 78712
                \\ schwob@utexas.edu}
\author{Peter Shiue}
\affil{Department of Mathematical Sciences\\
                University of Nevada, Las Vegas\\
                Las Vegas, NV, 89154 \\ shiue@unlv.nevada.edu}
\author{Rama Venkat}
\affil{Howard R. Hughes College of Engineering\\
                University of Nevada, Las Vegas\\
                Las Vegas, NV, 89154 \\ rama.venkat@unlv.edu}

\date{}

\maketitle

\providecommand{\keywordsh}[1]
{
  \small	
  \textbf{\textit{Keywords---}} #1
}

\providecommand{\subjectclassification}[1]
{
  \small	
  \textbf{\textit{2010 Mathematics Subject Classification---}} #1
}
	
\begin{abstract}
Proposed in 1937, the Collatz conjecture has remained in the spotlight for mathematicians and computer scientists alike due to its simple proposal, yet intractable proof. In this paper, we propose several novel theorems, corollaries, and algorithms that explore relationships and properties between the natural numbers, their peak values, and the conjecture. These contributions primarily analyze the number of Collatz iterations it takes for a given integer to reach 1 or a number less than itself, or the relationship between a starting number and its peak value.
\end{abstract}

\keywordsh{Collatz conjecture, 3x+1 problem, geometric series, iterations, algorithms, peak values}

\subjectclassification{11B75,11B83,68R01,68R05}
	
\section{Introduction}

In 1937, the German mathematician Lothar Collatz proposed a conjecture that states the following sequence will always reach 1, regardless of the given positive integer $n$: if the previous term in the sequence is even, the next term is half of the previous term; if the previous term in the sequence is odd, the next term is 3 times the previous term plus 1 \cite{andrei1998}. This conjecture is considered impossible to prove given modern mathematics. In fact, Paul Erdős has claimed that ``mathematics may not be ready for such problems" \cite{Erdos}. Jeffrey Lagarias has echoed this sentiment and provides a thorough summary of numerous results concerning the conjecture in \cite{Lagarias}.

Despite today's supposed inability to prove the Collatz conjecture, several papers have outlined discoveries related towards the seemingly impossible proof. Perhaps the most notable recent development was made by Terence Tao, who showed that most orbits of the Collatz map attain almost bounded values \cite{tao2019}. An excellent review of his paper was published in the \textit{College Mathematics Journal} \cite{hartnett2019}. This significant development illuminates one potential avenue to proving the Collatz Conjecture; however, other paths to the proof remain viable.

Some mathematicians have attempted to break the Collatz conjecture up into several smaller problems or have rephrased the conjecture in terms that may be more tractable. For example, Ren proves that if his Reduced Collatz conjecture (RCC) is proven, then that proof can be applied to the Collatz conjecture. Ren also proved that half of the natural numbers follow reduced Collatz dynamics; however, the second half of the proof for the RCC remains unsolved \cite{ren2019}. While exploring reduced Collatz dynamics may be a promising route for the proof of the Collatz conjecture, it is likely beneficial to propose other potential routes to the proof \cite{arslan2018,ren2019b,zarnowski2019}.

Several manuscripts have outlined various, unsuccessful paths that mathematicians have taken to solve the conjecture \cite{lagarias1985,latourette2007,van2005}. Perhaps the proof of this mesmerizing conjecture is so desired because the conjecture itself finds use in various applications. The Collatz conjecture is used in high-uncertainty audio signal encryption \cite{renza2019}, image encryption \cite{ballesteros2018}, dynamic software watermarking \cite{ma2019}, and information discovery \cite{idowu2015}. Although the lack of a proof does not preclude applications of the Collatz conjecture, mathematical phenomenon related to the conjecture may be of interest in other applications, such as cryptography or information theory.

In section 2, we present novel theorems and corollaries that explore a mapping that follows the Collatz conjecture; this section is broken up into two subsections with the first focusing on determining the number of Collatz iterations for a natural number to reach 1 and the second subsection focusing on an analysis of peak values. In section 3, an algorithm to obtain a directed graph of the conjecture is provided, as well as an algorithm that analyzes iterations of the conjecture and an algorithm that determines peak values in Collatz sequences. Lastly, we summarize these novel theorems, corollaries, and algorithms in section 4, and we relate them to the Collatz conjecture.

	
\section{The Conjecture and Related Theorems}

Let $\mathbb{N}$ be the set of all positive integers and $n \in \mathbb{N}$. Collatz defined the following map:

\[ T(n)= \begin{cases} 
\frac{n}{2} & \text{, if }n\text{ is even} \\
3n+1 & \text{, if }n\text{ is odd}
\end{cases}
\]

A sequence can be formed by iterating over $T$: $n$, $T(n)$, $T^2(n)=T(T(n)),...,T^k(n)=T(T^{k-1}(n))$ for $k\in\mathbb{N}$. The Collatz conjecture asserts that the sequence obtained by iterating $T$ always reaches the integer 1, no matter which positive integer $n$ begins the sequence. A diagram that depicts the relationship between positive integers is shown in Fig. 1. We start our exploration of the Collatz conjecture with the following theorem.

\begin{theorem}
    Let $a_{n+1}=(3b+1)a_n+b$ and $n \ge 2$, where $a_1$ and $b$ are given positive, odd integers. Then,
	
	$$\text{(i): } T(a_{n+1})=(3b+1)T(a_n)$$
	
	$$\text{(ii): } T(a_{n+1})=(3b+1)^nT(a_1)$$
\end{theorem}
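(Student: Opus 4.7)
The plan is to first establish that every term $a_n$ of the sequence is odd, then to deduce (i) by direct algebra on the recurrence, and finally to obtain (ii) by iterating (i).

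The parity claim is the first step and essentially the only one that requires any thought. Since $b$ is odd, $3b+1$ is even, so the product $(3b+1)a_n$ is even regardless of the parity of $a_n$. Adding the odd integer $b$ therefore forces $a_{n+1}$ to be odd. Combined with the hypothesis that $a_1$ is odd, induction gives that $a_n$ is odd for every $n \ge 1$. Consequently the Collatz map acts on every $a_n$ by the odd-case rule $T(a_n) = 3a_n + 1$.

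With uniform parity in hand, part (i) reduces to substituting the recurrence into $T(a_{n+1}) = 3a_{n+1}+1$ and factoring $3b+1$ out of the resulting expression $3\bigl[(3b+1)a_n + b\bigr] + 1$ to recover $(3b+1)(3a_n+1) = (3b+1)T(a_n)$. Part (ii) then follows by an elementary induction on $n$: part (i) applied at index $1$ serves as the base case $T(a_2) = (3b+1)T(a_1)$, and each inductive step applies (i) again to introduce one further factor of $3b+1$, yielding $T(a_{n+1}) = (3b+1)^n T(a_1)$. The main (and rather mild) obstacle throughout is simply the parity bookkeeping; once that is secured, both claims reduce to routine linear manipulation combined with the odd-case formula for $T$.
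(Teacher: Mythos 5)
Your proposal is correct and follows essentially the same route as the paper: direct substitution and factoring for (i), then induction on $n$ for (ii). Your explicit parity argument (that $(3b+1)a_n$ is even and adding the odd $b$ makes $a_{n+1}$ odd) is a welcome piece of bookkeeping that the paper's proof silently assumes when it applies $T(a_{n+1})=3a_{n+1}+1$.
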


\begin{proof}
	
	For (i): By the Collatz conjecture, we have 
	$$T(a_{n+1})=3(a_{n+1})+1=3\left[(3b+1)a_n+b\right]+1$$
	
	$$=9ba_n+3a_n+3b+1=(3b+1)(3a_n+1)=(3b+1)T(a_n)$$
	
	For (ii): We prove through a combination of (i) and mathematical induction. Let $n=1$. Then, 
	
	$$T(a_{n+1})=(3b+1)^nT(a_1)=(3b+1)T(a_1),$$
	which is true by (i). Now, assume that $T(a_{n+1})=(3b+1)^nT(a_1)$. If we show that $T(a_{(n+1)+1})=(3b+1)^{(n+1)}T(a_1)$, then we prove (ii). By (i),
	
	$$T(a_{(n+1)+1})=T(a_{n+2})=(3b+1)T(a_{n+1}),$$
	and by our assumption,
	$$(3b+1)T(a_{n+1})=(3b+1)(3b+1)^nT(a_1)=(3b+1)^{n+1}T(a_1).$$
	Therefore, we have proved through mathematical induction that $T(a_{n+1})=(3b+1)^nT(a_1)$.
	
\end{proof}

\begin{figure}[h]
	\caption{Collatz Conjecture Visualized}
	\vspace{0.5cm}
	\centering
	\includegraphics[scale=0.3]{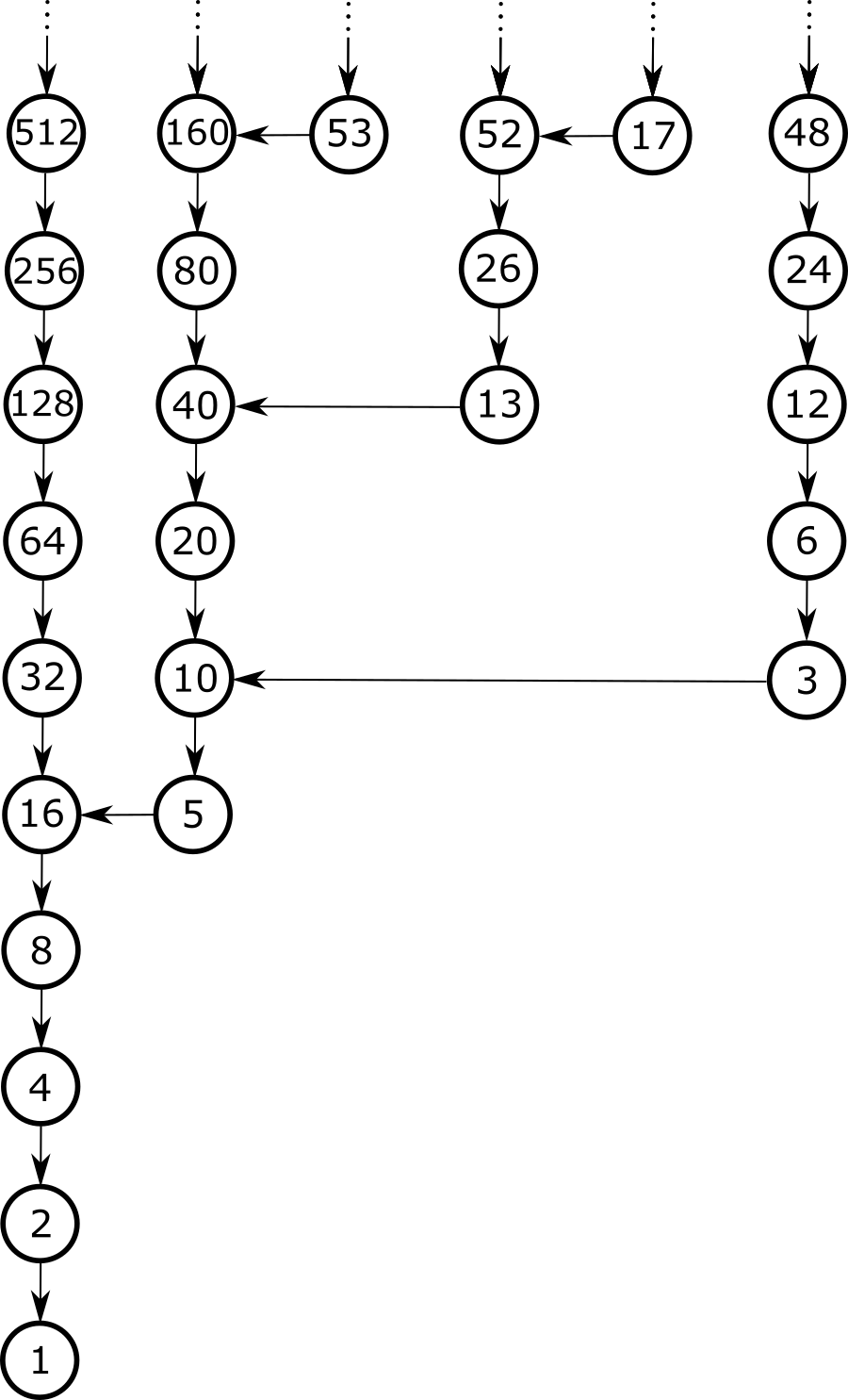}
\end{figure}

From this theorem, we can establish a unique relationship for a particular choice in $b$. This relationship is delineated in our first corollary.

\begin{corollary}

	Let $a_{n+1}=4^ka_n+\frac{4^{k}-1}{3}$. Then, 
	
	$$\text{(i): } T(a_{n+1})=4^kT(a_n),$$
	$$\text{(ii): } T(a_{n+1})=(4^k)^nT(a_1),$$
	which follows from Theorem 2.1 when $3b+1=4^k$.
	
\end{corollary}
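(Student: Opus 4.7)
The plan is to derive Corollary 2.2 as a direct specialization of Theorem 2.1 by choosing the parameter $b$ so that $3b+1$ equals $4^k$. Setting $b=\tfrac{4^k-1}{3}$ immediately gives $3b+1=4^k$, so conclusions (i) and (ii) of the corollary will follow formally by substitution into the two identities of Theorem 2.1. The only real content of the proof is therefore verifying that this choice of $b$ satisfies the hypotheses of Theorem 2.1, namely that $b$ is a \emph{positive odd integer}.

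First, I would check that $b=\tfrac{4^k-1}{3}$ is an integer. Using the factorization
\[
4^k-1 = (4-1)\bigl(4^{k-1}+4^{k-2}+\cdots+4+1\bigr) = 3\sum_{i=0}^{k-1}4^i,
\]
we obtain $b=\sum_{i=0}^{k-1}4^i$, which is clearly a positive integer for every $k\ge 1$. Next, I would confirm that $b$ is odd: in the sum $1+4+4^2+\cdots+4^{k-1}$, every term with $i\ge 1$ is even, while the $i=0$ term equals $1$, so the total is odd. Hence $b$ meets the parity hypothesis of Theorem 2.1, and we may also freely assume $a_1$ is a positive odd integer, as required there.

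With the hypotheses verified, the recurrence $a_{n+1}=4^k a_n + \tfrac{4^k-1}{3}$ is precisely the recurrence of Theorem 2.1 with this particular $b$. Part (i) of Theorem 2.1 then gives $T(a_{n+1})=(3b+1)T(a_n)=4^k T(a_n)$, and part (ii) gives $T(a_{n+1})=(3b+1)^n T(a_1)=(4^k)^n T(a_1)$, establishing (i) and (ii) of the corollary respectively.

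The main (and essentially only) obstacle is the parity/integrality check on $b$; once that is in place, the corollary is a one-line substitution into Theorem 2.1. I would write the proof accordingly: a short paragraph verifying $b=\tfrac{4^k-1}{3}\in\mathbb{N}$ is odd, followed by two lines invoking Theorem 2.1(i) and (ii).
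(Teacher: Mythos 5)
Your proposal is correct and takes the same route as the paper, which simply states that the corollary follows from Theorem 2.1 upon setting $3b+1=4^k$. Your additional verification that $b=\frac{4^k-1}{3}=1+4+\cdots+4^{k-1}$ is a positive odd integer is a worthwhile detail the paper leaves implicit, but it does not change the argument.
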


We can then use (i) from corollary 2.2 to help establish the following:

\begin{corollary}
	Let $k=1$, $b=1$, and $a_n=2j+1$, where $j \in \mathbb{N}$. Since $a_{n+1}=4a_n+1=8j+5$, we have $T(8j+5)=4T(2j+1)$.
\end{corollary}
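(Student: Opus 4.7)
The plan is to proceed by direct specialization of Corollary 2.2. The corollary is essentially a worked-out instance of that result, so no independent machinery is needed; the only things to check are that the hypotheses of Corollary 2.2 (and hence of Theorem 2.1) are actually satisfied under the given choices of parameters, and that the arithmetic identifying $a_{n+1}$ as $8j+5$ is correct.

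First I would record the hypotheses. Corollary 2.2 requires $a_{n+1} = 4^{k}a_n + \tfrac{4^{k}-1}{3}$, which itself inherits from Theorem 2.1 the requirement that the initial term and the parameter $b$ be positive odd integers. Setting $k=1$ collapses the recurrence to $a_{n+1} = 4a_n + 1$, and setting $b=1$ (an odd positive integer) aligns with $3b+1 = 4 = 4^{k}$. Taking $a_n = 2j+1$ for $j \in \mathbb{N}$ guarantees that $a_n$ is a positive odd integer, so the standing oddness hypothesis is intact.

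Next I would substitute to obtain the claimed formula for $a_{n+1}$: simply compute $a_{n+1} = 4(2j+1) + 1 = 8j + 5$. Then Corollary 2.2(i) with $k=1$ reads $T(a_{n+1}) = 4\,T(a_n)$, which upon substituting the explicit forms becomes $T(8j+5) = 4\,T(2j+1)$, as claimed.

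There is no genuine obstacle here, since the statement is a specialization rather than a new result; if desired, a one-line sanity check follows from applying $T$ directly. Both $2j+1$ and $8j+5$ are odd, so $T(2j+1) = 6j+4$ and $T(8j+5) = 24j+16 = 4(6j+4)$, confirming the identity.
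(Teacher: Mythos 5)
Your proposal is correct and follows the same route the paper takes: specializing Corollary 2.2(i) with $k=1$, $b=1$, $a_n=2j+1$ to get $a_{n+1}=4(2j+1)+1=8j+5$ and hence $T(8j+5)=4T(2j+1)$. The added direct verification $T(8j+5)=24j+16=4(6j+4)=4T(2j+1)$ is a harmless bonus that makes the specialization self-contained.
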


Suppose $j=1$ and examine $T(13)$. By corollary 2.3, $T(13)=4T(3)=2^2T(2j+1)$. We can extract a convenient and meaningful relationship from this corollary. Note that the sequence starting with integer 13 takes 9 iterations to reach 1 in the Collatz conjecture, and the sequence that starts at integer 3 takes 7 iterations. Therefore, the sequence starting at 13 takes 2 more iterations than the sequence starting at 3. Now, take $j=2$ and examine $T(21)$. By corollary 2.3, $T(21)=2^2T(5)$. Through the Collatz conjecture, the sequence starting at integer 5 takes 5 iterations to reach 1, while the sequence starting at integer 21 takes 7 iterations to reach 1. Therefore, there is a difference of two iterations between sequences starting at $8j+5$ and $2j+1$ for any $j \in \mathbb{N}$. We see that, if $a_n=2j+1$ reaches 1 in the conjecture, then $a_{n+1}$ reaches 1.


\subsection{Counting Iterations} 

Most approaches to proving the Collatz conjecture will inevitably detail the number of iterations it takes for a given number $n$ to reach 1. Hence, we provide relationships between $\mathbb{N}$ and the number of Collatz iterations it would take to complete a sequence ending in 1; this will be of direct interest to the academic community attempting to prove the conjecture. 

In the theorem that follows, we provide a more general relationship that implies if $a_n$ reaches 1 in finitely many iterations, then $a_{n+1}=(3b+1)a_n+b$ also reaches 1 in finitely many iterations.

\begin{theorem}
	Suppose $a_n$ reaches 1 with $j$ iterations of the Collatz conjecture. Then, $a_{n+1}=4^ka_n+\frac{4^{k}-1}{3}$ reaches 1 with $2k+j$ iterations.
\end{theorem}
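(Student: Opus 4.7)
The plan is to compare the Collatz trajectories emanating from $a_n$ and from $a_{n+1}$, and show that the latter reaches a common odd value exactly $2k$ steps later than the former, after which the two trajectories coincide.

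First I would settle the parities. Corollary 2.2 inherits from Theorem 2.1 the hypothesis that $a_n$ is odd. Writing $\frac{4^k-1}{3}=4^{k-1}+4^{k-2}+\cdots+4+1$ as a geometric sum, exactly one of the $k$ summands (namely $4^0=1$) is odd, so the sum itself is odd; consequently $a_{n+1}=4^k a_n+\frac{4^k-1}{3}$ is also odd. Therefore the odd branch of $T$ applies at both starting points, and Corollary 2.2(i) gives $T(a_{n+1})=4^k T(a_n)=2^{2k}T(a_n)$.

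Next I would factor out the $2$-adic valuation of $T(a_n)$. Since $a_n$ is odd, $T(a_n)=3a_n+1$ is even; write $T(a_n)=2^m q$ with $q$ odd and $m\ge 1$. Then $T(a_{n+1})=2^{2k+m}q$. Following the Collatz rules, the trajectory from $a_n$ takes $m+1$ applications of $T$ to reach the odd integer $q$ (one application of $3x+1$ followed by $m$ halvings), while the trajectory from $a_{n+1}$ takes $(2k+m)+1$ applications of $T$ to reach the same $q$. Beyond $q$, the two sequences are literally identical, so the total step count from $a_{n+1}$ to $1$ exceeds that from $a_n$ to $1$ by exactly $2k$, giving $2k+j$ as claimed.

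The main subtlety to check is the edge case $q=1$, which occurs precisely when $3a_n+1$ is a pure power of $2$ (for example $a_n=1$, giving $m=2$). In that case the $a_n$-trajectory lands on $1$ at step $m+1$, so $j=m+1$, and the $a_{n+1}$-trajectory lands on $1$ at step $2k+m+1=j+2k$, so the bookkeeping still goes through. The only real obstacle I anticipate is making sure the iteration count is applied consistently at both ends (taking $j$ to be the least positive integer with $T^j(a_n)=1$); once this convention is fixed, the argument is essentially a one-line consequence of Corollary 2.2 together with the factorisation $T(a_n)=2^m q$.
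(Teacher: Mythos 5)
Your proof is correct and follows essentially the same route as the paper: both hinge on Corollary 2.2(i) giving $T(a_{n+1})=2^{2k}T(a_n)$, followed by the observation that $2k$ halvings return the $a_{n+1}$-trajectory to $T(a_n)$, after which the two sequences coincide. Your additional care with the parity of $a_{n+1}$ and the factorisation $T(a_n)=2^m q$ is a harmless elaboration of the same idea (the paper instead applies the remaining $j-1$ iterations directly to $T(a_n)$), so there is nothing substantively different to report.
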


\begin{proof}
	Note that $T^j(a_n)=1$, where $j$ is the number of iterations required of the Collatz conjecture for $a_n$ to reach 1. From corollary 2.2, we have $T(a_{n+1})=4^kT(a_n)=2^{2k}T(a_n)$. Since $T^{2k}(2^{2k})=1$, we have $$T^{2k}(2^{2k}T(a_n))=T(a_n),$$ which gives us $$T^{2k}(T(a_{n+1}))=T^{2k}(2^{2k}T(a_n))=T(a_n).$$ 
	For any positive integer $j$, $$T^{j-1}\left[T^{2k+1}(a_{n+1})\right] = T^{j-1}\left[T(a_n)\right]=T^j(a_n).$$
	Since $T^j(a_n)=1$, we have $T^{2k+j}(a_{n+1})=1$.
\end{proof}

\begin{example}
    Fix $k=1$, and suppose we let $a_1=3$. If we take $a_{n+1}=4a_n+1$, then $a_2=13$, $a_3=53$, and $a_4=213$. By theorem 2.4, $a_2$, $a_3$, and $a_4$ will reach 1 in a finite number of iterations so long as $a_1$ does. From Fig. 1, we know that $T^7(3)=1$. Therefore, 13 reaches 1 with $7+2=9$ iterations. Similarly, 53 and 213 reach 1 with 11 and 13 iterations, respectively. This is depicted in Fig. 2. Another example, where $a_1=5$, is depicted in Fig. 3.
\end{example}

\begin{figure}[h]
	\caption{Collatz Conjecture with $a_1=3$}
	\vspace{0.5cm}
	\centering
	\includegraphics[scale=0.3]{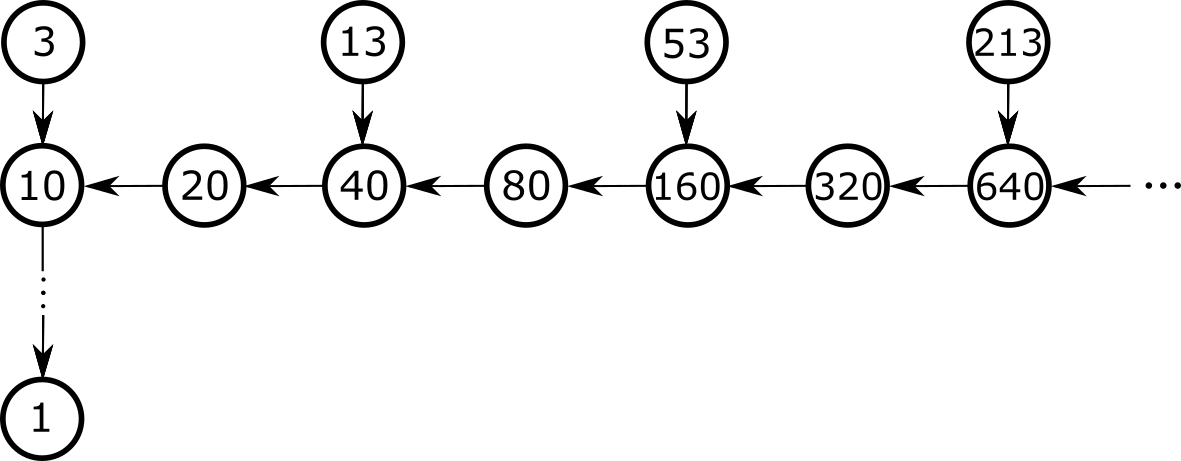}
\end{figure}

\begin{figure}[h]
	\caption{Collatz Conjecture with $a_1=5$}
	\vspace{0.5cm}
	\centering
	\includegraphics[scale=0.3]{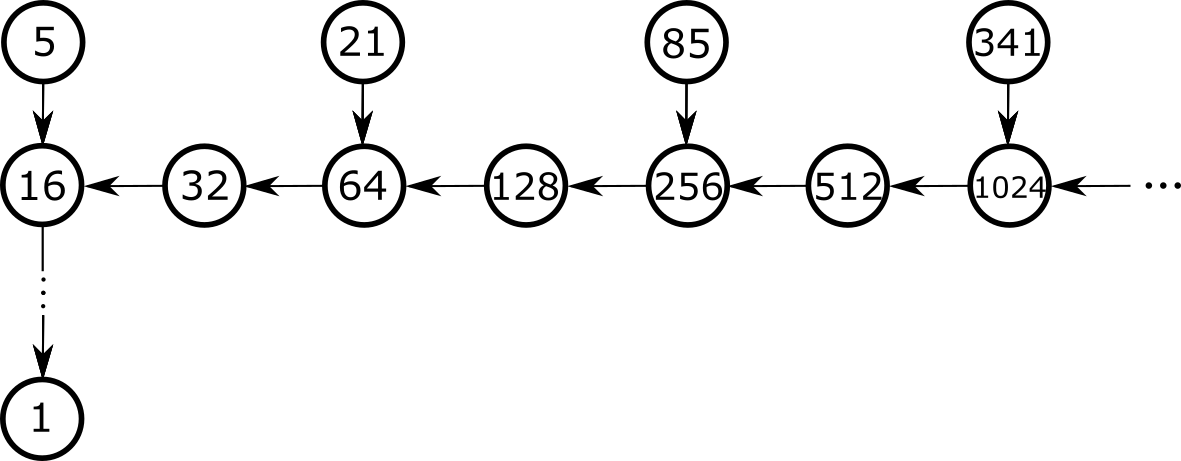}
\end{figure}

A special case arises from Theorem 2.4 when $a_n=1$ and, consequently, $a_k=a_{n+1}=4^k+\frac{4^{k}-1}{3}=\frac{4^{k+1}-1}{3}$. By Theorem 2.4, $a_k$ reaches 1 with $2k+3$ Collatz iterations. Note that $a_k$ takes on the form of a geometric series with ratio $4$: $$a_k=1+4+4^2+\dots+4^k.$$ Therefore, this particular geometric series shares a unique relationship with the Collatz conjecture. Perhaps this discovery may inspire new approaches to the Collatz conjecture's proof that utilize series and their properties.


While general relationships would be the most helpful to establish a proper proof of the Collatz conjecture, an analysis of specific relationships may prove nontrivial. In the following theorem, we explore such relationships similar to the special sequences relating to the Collatz conjecture in \cite{jenber2018}.

\begin{theorem}
	Let $k \in \mathbb{N}$ and $t\in \mathbb{N} \cup \{0\}$. We have the following relationships:
	
	(i) If $n=4k+1$, then the number of iterations until the Collatz sequence reaches a number less than $n$ is 3. That is, $T^3(4k+1)<4k+1$.
	
	(ii) If $n=16t+3$, then $T^6(16t+3) < 16t+3$.
	
	(iii) If $n=32t+11$ or $n=32t+23$, then $T^8(n)<n$.
	
	(iv) If $n=128t+7$, $n=128t+15$, or $n=128t+59$, then $T^{11}(n)<n$.
	
	(v) If $n=256k+39$, $n=256k+79$, $n=256k+95$, $n=256k+123$, $n=256k+199$, or $n=256k+219$ then $T^{13}(n)<n$.
\end{theorem}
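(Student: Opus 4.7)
The plan is to prove each of (i)--(v) by direct symbolic computation: for a fixed residue class $n \equiv r \pmod{2^m}$, the parity of each successive iterate $T^i(n)$ is determined purely by the residue of $n$ modulo an appropriate power of $2$, so we can write every $T^i(n)$ in the form $\alpha_i k + \beta_i$ (or $\alpha_i t + \beta_i$) and check the inequality at the end. First I would set up the bookkeeping rules: if $T^i(n) = \alpha k + \beta$ with $\beta$ odd, then $T^{i+1}(n) = 3\alpha k + (3\beta+1)$; if $\beta$ is even, then $T^{i+1}(n) = (\alpha/2) k + (\beta/2)$. Since $\alpha$ is always a power of $2$ divided by some $2^j$ and multiplied by some $3^\ell$, the parity of the constant term $\beta$ (equivalently, of $T^i(n)$ itself) is governed by the original residue class of $n$; this is what lets me work the cases in closed form without ever branching.

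Next I would execute the computation for (i) as a warm-up, which is the easiest: for $n=4k+1$ the iterates are $12k+4, 6k+2, 3k+1$, and $3k+1 < 4k+1$ for all $k \geq 1$. For (ii), starting at $n=16t+3$, tracking six steps yields $T^6(16t+3) = 9t+2$, and $9t+2 < 16t+3$ for every $t\geq 0$. The same template handles (iii): one carries $n = 32t+11$ and $n = 32t + 23$ separately through eight iterations, arriving at expressions of the form $27t+c$, which are less than $32t + r$ whenever $t \geq 0$. Parts (iv) and (v) are identical in spirit but each residue requires $11$ or $13$ steps, producing final values with leading coefficient $3^4 = 81$ for (iv) and $3^5 = 243$ for (v), both safely less than $128$ and $256$ respectively. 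In every case the parity of each intermediate value must be checked against the current modulus to justify the rule applied, and one records the accumulated multiplier $3^{(\text{number of odd steps})}/2^{(\text{number of even steps})}$.

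After the iterates are computed, the final step is to verify $\alpha_i r + \beta_i < 2^m r + s$ (where $n = 2^m r + s$) for the relevant variable, which is a trivial linear inequality in each of the twelve cases. To ensure the parity pattern used in the computation is truly forced by the residue class, I would check at each odd-step that the value $T^i(n)$ really is odd for all choices of the free variable: this amounts to observing that the constant term $\beta_i$ has the claimed parity and that the coefficient $\alpha_i$ is even (so $\alpha_i k + \beta_i$ has the parity of $\beta_i$). This parity-preservation check is what actually justifies reducing $n$ modulo $2^6$, $2^7$, or $2^8$ in (iii)--(v).

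The main obstacle is not conceptual but bookkeeping: part (v) requires carrying out $13$-step symbolic Collatz trajectories for six distinct residues, and a single arithmetic slip in any one of them invalidates the claim for that residue. I would therefore organize the work into a table whose rows are $i = 0, 1, \dots, 13$ and whose columns are the six residues, filling it one row at a time and sanity-checking the parity of $\beta_i$ at every step before applying the appropriate rule. Once the table is complete, the bound $T^{13}(n) < n$ falls out by inspection of the final row.
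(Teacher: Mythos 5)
Your proposal is correct and follows essentially the same approach as the paper: symbolically iterating $T$ on each residue class, using the fixed parity pattern forced by the modulus, and checking the resulting linear inequality (the paper carries this out explicitly for (i) and (ii) and defers (iii)--(v) to ``a similar process'' illustrated by numerical tables). If anything, your plan is more complete than the printed proof, since you propose to carry the full symbolic trajectory through all twelve residue classes rather than relying on sample computations.
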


\begin{proof}
	
	This proof can be obtained analytically or computationally. Since all points in this proof follow the same structure, we only analytically prove (i) and (ii).
	
	For (i): We have that $T(4k+1)=3(4k+1)+1=12k+4$. Similarly, we can obtain $T^2(4k+1)=T(12k+4) =\frac{12k+4}{2}= 6k+2$, and $T^3(4k+1)=T(6k+2)=3k+1<4k+1$.
	
	For (ii): We continue to follow the Collatz sequence. We have $T(16t+3)=48t+10$, $T^2(16t+3)=T(48t+10)=24t+5$, $T^3(16t+3)=T(24t+5)=72t+16$, $T^4(16t+3)=T(72t+16)=36t+8$, $T^5(16t+3)=T(36t+8)=18t+4$, and $T^6(16t+3)=T(18t+4)=9t+2<16t+3$.
	
	The proof for the remaining portions of this theorem are proved through a similar process, which can be supplemented with computation. Examples of computations for the last three portions of this theorem are provided in Tables 1-3.
\end{proof}

\begin{table}[]
	\centering
	\caption{Theorem 2.6 (iii)}
	\begin{tabular}{@{}cccc@{}}
		\toprule
		\textbf{Iteration} & \textbf{t=1: n=43} & \textbf{t=7: n=247} & \textbf{t=14: n=459} \\ \midrule
		\textbf{1}         & 130                & 742                 & 1378                 \\
		\textbf{2}         & 65                 & 371                 & 689                  \\
		\textbf{3}         & 196                & 1114                & 2068                 \\
		\textbf{4}         & 98                 & 557                 & 1034                 \\
		\textbf{5}         & 49                 & 1672                & 517                  \\
		\textbf{6}         & 148                & 836                 & 1552                 \\
		\textbf{7}         & 74                 & 418                 & 776                  \\
		\textbf{8}         & 37                 & 209                 & 388                  \\ \bottomrule
	\end{tabular}
\end{table}

\begin{table}[]
	\centering
	\caption{Theorem 2.6 (iv)}
	\begin{tabular}{@{}cccc@{}}
		\toprule
		\textbf{Iteration} & \textbf{t=1: n=135} & \textbf{t=7: n=911} & \textbf{t=14: n=1851} \\ \midrule
		\textbf{1}         & 406                 & 2734                & 5554                  \\
		\textbf{2}         & 203                 & 1367                & 2777                  \\
		\textbf{3}         & 610                 & 4102                & 8332                  \\
		\textbf{4}         & 305                 & 2051                & 4166                  \\
		\textbf{5}         & 916                 & 6154                & 2083                  \\
		\textbf{6}         & 458                 & 3077                & 6250                  \\
		\textbf{7}         & 229                 & 9232                & 3125                  \\
		\textbf{8}         & 688                 & 4616                & 9376                  \\
		\textbf{9}         & 344                 & 2308                & 4688                  \\
		\textbf{10}        & 172                 & 1154                & 2344                  \\
		\textbf{11}        & 86                  & 577                 & 1172                  \\ \bottomrule
	\end{tabular}
\end{table}

\begin{table}[]
	\centering
	\caption{Theorem 2.6 (v)}
	\begin{tabular}{@{}cccc@{}}
		\toprule
		\textbf{Iteration} & \textbf{t=1: n=295} & \textbf{t=7: n=1871} & \textbf{t=14: n=3707} \\ \midrule
		\textbf{1}         & 886                 & 5614                 & 11122                 \\
		\textbf{2}         & 443                 & 2807                 & 5561                  \\
		\textbf{3}         & 1330                & 8422                 & 16684                 \\
		\textbf{4}         & 665                 & 4211                 & 8342                  \\
		\textbf{5}         & 1996                & 12634                & 4171                  \\
		\textbf{6}         & 998                 & 6317                 & 12514                 \\
		\textbf{7}         & 499                 & 18952                & 6257                  \\
		\textbf{8}         & 1498                & 9476                 & 18772                 \\
		\textbf{9}         & 749                 & 4738                 & 9386                  \\
		\textbf{10}        & 2248                & 2369                 & 4693                  \\
		\textbf{11}        & 1124                & 7108                 & 14080                 \\
		\textbf{12}        & 562                 & 3554                 & 7040                  \\
		\textbf{13}        & 281                 & 1777                 & 3520                  \\ \bottomrule
	\end{tabular}
\end{table}

The consequence of this theorem is that each of the positive integers $n$ that can be written as any of the above forms will eventually become a number less than itself. If $n$ does not abide by the above forms, its pattern is unknown. If a similar, general relationship were to be discovered such that each positive integer $n$ becomes a number less than itself, one can then prove the Collatz conjecture. 

The next three examples reveal the number of iterations it takes for an integer to reach a number less than itself. The first example arises from a special case of Theorem 2.6(i), and the last two examples pertain to two particular geometric series.

\begin{example}
    Consider $3^m$, where $m$ is even. Then, we have $$3^m \equiv (-1)^m \equiv 1 (\text{mod } 4).$$ We can then write $3^m=4k+1$ for some $k \in \mathbb{N}$. By Theorem 2.6(i), $T^3(3^m)<3^m$. Similarly, $7^m=4k_1+1$ and $11^m=4k_2+1$ for some $k_1,k_2 \in \mathbb{N}$. In general, we can write $n=(3+4t)^m=4k+1$ for some $k \in \mathbb{N}$ and even $m$. Then, by Theorem 2.6, $T^3\left[(3+4t)^m\right]<(3+4t)^m$.
\end{example}

\begin{example}
    Let $s_m=1+4^n+4^{2n}+\cdots+4^{mn}$. Then, $s_m$ will reduce to a number less than itself in 3 Collatz iterations. We can show this by rewriting $$s_m=1+4(4^{n-1}+\cdots+4^{mn-1})=1+4t,$$ for some $t \in \mathbb{N}$. Therefore, by Theorem 2.6(i), $s_m$ reduces to a number less than itself in 3 Collatz iterations. That is, $T^3(s_m)<s_m$.
\end{example}




\begin{example}
    Let $s_m=1+(an+b)+(an+b)^2+\cdots + (an+b)^m$, where $\text{gcd}(a,b)=4$. Then, $s_m$ reduces to a natural number less than itself in 3 Collatz iterations. We can rewrite $s_m$ as $$s_m=1+(an+b)\left[1+(an+b)+\cdots+(an+b)^{m-1}\right]$$
    $$=1+4\left(\frac{a}{4}n+\frac{b}{4}\right)\left[1+(an+b)+\cdots+(an+b)^{m-1}\right]$$
    $$=1+4t,$$
    for some $t \in \mathbb{N}$. By Theorem 2.6(i), $T^3(s_m)<s_m$.
\end{example}

	



It is important to note that these last two examples can be proven by computation without using Theorem 2.6(i).
\begin{proof}
    For the first example, let $s_m=1+4^n+4^{2n}+\cdots+4^{mn}=\frac{(4^n)^{m+1}-1}{4^n-1}=Q_1$, which will be an odd integer. Then, $Q_2=T(Q_1)=\frac{3(4^n)^{m+1}+4^n-4}{4^n-1}=\frac{4[3(4)^{n(m+1)-1}+4^{n-1}-1]}{4^n-1}$. Since $Q_2$ is even, $Q_4=T(Q_3)=T(T(Q_2))=\frac{3(4)^{n(m+1)-1}+4^{n-1}-1}{4^n-1}$. Note that we can rewrite $Q_4$ as
$$\frac{3(4)^{n(m+1)-1}+4^{n-1}-1}{4^n-1}=\frac{4(4)^{n(m+1)-1}-4^{n(m+1)-1}+4^{n-1}-1}{4^n-1}$$
$$=\frac{4^{n(m+1)}-4^{n(m+1)-1}+4^{n-1}-1}{4^n-1}.$$
For all $m,n\ge 1$, we have $4^{n-1} < 4^{n(m+1)-1}$. So,
$$Q_4 < \frac{4^{n(m+1)}-4^{n(m+1)-1}+4^{n(m+1)-1}-1}{4^n-1}=\frac{4^{n(m+1)}-1}{4^n-1}=Q_1.$$
Therefore, $T^3(s_m)<s_m$.
\end{proof}

\begin{proof}
    For the second example, let $s_m=1+(12n+4)+(12n+4)^2+\cdots + (12n+4)^m$, where $a=12$ and $b=4$.  So, $s_m=\frac{(12n+4)^{m+1}-1}{12n+4-1}=\frac{(12n+4)^{m+1}-1}{3(4n+1)}=Q_1$. Since $Q_1$ is odd, we can obtain $Q_2=\frac{(12n+4)^{m+1}-1+4n+1}{4n+1}=\frac{4[(3n+1)(12n+4)^m+n]}{4n+1}.$	Here, $Q_2$ is even, so we can obtain $Q_3=\frac{2[(3n+1)(12n+4)^m+n]}{4n+1}$ and $Q_4=\frac{(3n+1)(12n+4)^m+n}{4n+1}$. We want to show that 
$$\frac{(3n+1)(12n+4)^m+n}{4n+1} < \frac{(12n+4)^{m+1}-1}{3(4n+1)}$$
$$\implies 3(3n+1)(12n+4)^m+3n+1<(12n+4)^{m+1}$$
$$\implies 3(12n+4)^m+1<\frac{(12n+4)^{m+1}}{3n+1}=\frac{(3n+1)^{m+1}4^{m+1}}{3n+1}$$
$$\implies 3(12n+4)^m+1<4(12n+4)^m,$$
which is clearly true $\forall n\in \mathbb{N}$. Therefore, $T^3(s_m)<s_m$.
\end{proof}

\subsection{Peak Values}

Analyzing the number of iterations required for a given $n$ to complete a Collatz sequence may lead to interesting results related to the Collatz conjecture, as can be seen above. Another fascinating set of values that may be of interest are those that are the maximum value within a Collatz sequence; we call these values ``peak values." The following seven theorems relate to peak values and their properties.

\begin{theorem}
    Let $n \in \mathbb{N}$, and let $T^*(n)=p$ denote the peak value of the Collatz sequence starting at $n$. Then, $p \equiv 0 (\text{mod } 4)$ (i.e., $4 \mid p$).
\end{theorem}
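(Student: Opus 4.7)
The plan is to use a two-step look-ahead from $p$ and derive a contradiction unless $4 \mid p$. The core observation is that whenever an odd value $m$ appears in a Collatz sequence, the successor $T(m) = 3m+1 > m$ strictly exceeds it. So odd values cannot be peaks, and more generally the behavior at any odd rung forces the sequence upward. I will exploit this twice in succession.

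First I would argue that $p$ must be even. If $p$ were odd, then $T(p) = 3p+1$ would appear later in the sequence and satisfy $T(p) > p$, contradicting that $p$ is the maximum. Hence $p$ is even, and $T(p) = p/2$ is a legitimate integer appearing after $p$ in the sequence.

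Next, I would suppose for contradiction that $p \equiv 2 \pmod{4}$, i.e.\ that $p/2$ is odd. Then $T^2(p) = T(p/2) = 3(p/2)+1 = (3p+2)/2$. A direct comparison gives
\[
T^2(p) - p \;=\; \frac{3p+2}{2} - p \;=\; \frac{p+2}{2} \;>\; 0,
\]
so $T^2(p) > p$, contradicting the maximality of $p$ in the sequence. Therefore $p/2$ must itself be even, which yields $4 \mid p$ and completes the proof.

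The only subtlety, and the main thing I would be careful about, is the edge case where $T(p)$ or $T^2(p)$ is not really ``in the sequence'' because the sequence has already terminated at $1$. This is exactly the starting values $n=1$ (sequence $1$, peak $1$) and $n=2$ (sequence $2,1$, peak $2$), for which the statement literally fails. I would either implicitly assume the Collatz sequence has at least one $3x+1$ step (so that a genuine peak $p > 2$ exists), or note these trivial cases as exclusions. Apart from this bookkeeping, the argument is a clean two-line structural fact about where peaks can sit modulo $4$, and no heavy computation is required.
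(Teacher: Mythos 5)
Your proof is correct and takes essentially the same approach as the paper: a two-step contradiction that first rules out $p$ odd (since $T(p)=3p+1>p$) and then rules out $p/2$ odd (since $T(p/2)=(3p+2)/2>p$), concluding $4\mid p$. Your caveat about the degenerate starting values $n=1$ and $n=2$, where the peak is $1$ or $2$ and the claim literally fails, is a legitimate gap that the paper's own proof silently ignores.
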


\begin{proof}
    First, we show that $p$ is even through contradiction. If $p$ is odd, then $p=2m+1$, where $m \in \mathbb{N}$. Then, $$T(p)=3p+1=6m+4>2m+1=p,$$
    which means that $p$ is not the peak value in its Collatz sequence. Thus, $p$ is even (i.e., $\frac{p}{2} \in \mathbb{N}$).
    Now, we claim that $\frac{p}{2}$ is even. If $\frac{p}{2}$ were odd, then $$T\left(\frac{p}{2}\right)=3\left(\frac{p}{2}\right)+1=\frac{3p+2}{2}>p,$$
    so $p$ is not the peak value. By contradiction, $\frac{p}{2}$ is even. Hence, $\frac{p}{4} \in \mathbb{N} \implies 4 \mid p$.
\end{proof}

\begin{theorem}
    Let $u$ be the odd number immediately before p. Then, $u \equiv 1 (\text{mod } 4)$ (i.e., $u=4l+1$ for some $l \in \mathbb{N}$).
\end{theorem}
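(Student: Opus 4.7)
The plan is to exploit the immediate structural fact that once $u$ is the last odd entry of the sequence before the peak $p$, the intervening values are forced to be halvings of $3u+1$; this pins down $p = 3u+1$, after which a one-line congruence mod $4$ finishes the argument.

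First I would set up the notation by writing the Collatz sequence starting at $n$ as $n = a_0, a_1, a_2, \ldots$, fixing the index $j$ with $a_j = p$, and letting $i$ be the largest index less than $j$ for which $a_i$ is odd, so that $u = a_i$. The definition of $T$ forces $a_{i+1} = 3u+1$, which is even, and the maximality of $i$ (no odd values among $a_{i+1}, \ldots, a_{j-1}$) then forces $a_{i+2}, \ldots, a_j$ to arise by successive halvings of $3u+1$.

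Next I would argue that $p = 3u+1$. Since halving a positive even integer strictly decreases it, the block $a_{i+1}, a_{i+2}, \ldots, a_j$ is strictly decreasing, so its maximum is $a_{i+1} = 3u+1$. Because $p$ is the global peak of the full sequence and also lies in this block, $p \leq a_{i+1}$; and since $a_{i+1}$ itself is an entry of the sequence, $p \geq a_{i+1}$. Hence $p = a_{i+1} = 3u+1$ (equivalently, $j = i+1$).

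The final step is a one-line congruence: by the previous theorem $4 \mid p$, i.e., $3u + 1 \equiv 0 \pmod 4$, which gives $3u \equiv 3 \pmod 4$ and therefore $u \equiv 1 \pmod 4$, so $u = 4l + 1$ as claimed. The main obstacle is really just being precise about why $p$ cannot be hiding further down the tail as some halved value of $3u+1$; once that structural observation about strictly decreasing halvings is nailed down, the reduction modulo $4$ is immediate and uses only that $u$ is odd together with Theorem~2.10.
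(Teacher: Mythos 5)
Your proof is correct and takes essentially the same route as the paper: both rest on $p=T(u)=3u+1$ together with Theorem~2.10's conclusion that $4\mid p$, the only cosmetic difference being that you solve the congruence $3u+1\equiv 0\pmod 4$ directly while the paper rules out $u=4l+3$ by contradiction. Your additional justification that the peak must equal $3u+1$ (because the halvings after the last odd term are strictly decreasing) is a sound extra step that the paper leaves implicit in its reading of ``immediately before.''
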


\begin{proof}
    Since $u$ is odd, either $u=4l+1$ or $u=4l+3$ for some $l \in \mathbb{N}$. Suppose that $u=4l+3$. Then, $p=T(u)=12l+10=2(6l+5)$. Since $2 \nmid 6l+5$, then $4 \nmid p$. However, $4 \mid p$ by theorem 2.10. So through contradiction, we know that $u=4l+1$. 
\end{proof}

\begin{theorem}
    Let $v$ be the odd number immediately after $p$. Then, $v < u$.
\end{theorem}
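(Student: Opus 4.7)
The plan is to combine Theorems 2.10 and 2.11 with the structure of the Collatz map just after $p$. By Theorem 2.10, $4\mid p$, so after $p$ the sequence executes a maximal run of halvings $p,\, p/2,\, p/4,\dots$ until reaching the next odd term, which is exactly $v$. Letting $2^k$ be the largest power of $2$ dividing $p$, we have $k\ge 2$ and $v = p/2^k$. Since $u$ is odd, $p = T(u) = 3u+1$, which yields the identity
\[
v \;=\; \frac{3u+1}{2^k}, \qquad k \ge 2.
\]

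From here I would split into two cases. If $k\ge 3$, then $v \le (3u+1)/8 < u$ holds automatically because $3u+1 < 8u$ for every positive integer $u$, so nothing further is needed. If $k=2$, then $v = (3u+1)/4$, and the inequality $v<u$ reduces to $3u+1<4u$, i.e.\ $u>1$; writing $u=4l+1$ from Theorem 2.11, this amounts to $l\ge 1$.

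The only real obstacle is the edge case $u=1$ (equivalently $l=0$). Here $p=4$, the tail of the Collatz sequence is $4\to 2\to 1$, and so $v = 1 = u$, violating the strict inequality. This corresponds to starting the Collatz sequence at $n=1$, a degenerate situation in which $p$ is merely a term of the trivial cycle rather than the peak of a genuine Collatz trajectory; it is naturally excluded from the hypothesis that $p$ is the peak value of a non-trivial sequence. In every other case, the two sub-arguments above deliver $v<u$ directly, completing the proof.
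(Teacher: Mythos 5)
Your proof is correct and follows essentially the same route as the paper: both use Theorems 2.10 and 2.11 to write $v = p/2^k$ with $k\ge 2$ and reduce the claim to $3u+1 < 4u$ (the paper's version is $v = (3l+1)/2^e \le 3l+1 < 4l+1 = u$). Your explicit treatment of the degenerate case $u=1$ is a small point of extra care that the paper handles only implicitly via its convention that $l\in\mathbb{N}$ means $l\ge 1$.
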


\begin{proof}
    By Theorem 2, $u=4l+1 \implies p=T(u)=12l+4$. So, $\frac{p}{4}=3l+1$. Let $3l+1=2^ev$, where $e$ is the largest power such that $2^e \mid 3l+1$ (i.e., $v=\frac{3l+1}{2^e}$). Then, $\frac{3l+1}{2^e}<4l+1 \implies v<u$.
\end{proof}

\begin{theorem}
    Let $u=2\cdot4^tk+\frac{4^{t+1}-1}{3}$, where $t \ge 1$. If $k$ is odd, then $2\cdot4^t \mid p$ and $v=3k+2$.
\end{theorem}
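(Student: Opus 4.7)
The plan is a direct computation: apply $T$ to $u$ to get $p$, extract the largest power of $2$ dividing $p$, and read off $v$ as the odd part.

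First I would verify that $u$ is odd, which is needed so that $T(u)=3u+1$. Since $2\cdot 4^t k$ is even and $\tfrac{4^{t+1}-1}{3}=1+4+4^2+\cdots+4^t$ is $1$ plus a sum of even numbers (hence odd), $u$ is odd. Next I would compute
\[
p = T(u) = 3u+1 = 6\cdot 4^t k + (4^{t+1}-1) + 1 = 6\cdot 4^t k + 4\cdot 4^t = 2\cdot 4^t(3k+2).
\]
This already exhibits the divisibility $2\cdot 4^t \mid p$ claimed in the theorem, independent of the parity of $k$.

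Now I would use the hypothesis that $k$ is odd. Since $3k$ is odd and $2$ is even, $3k+2$ is odd. Writing $p = 2^{2t+1}(3k+2)$ with $3k+2$ odd shows that $2^{2t+1}$ is exactly the largest power of $2$ dividing $p$, so the Collatz halving steps applied to $p$ yield
\[
\frac{p}{2},\ \frac{p}{4},\ \ldots,\ \frac{p}{2^{2t+1}} = 3k+2,
\]
and this last value is odd. Thus $v = 3k+2$, the odd number immediately after $p$ in the Collatz sequence, as required.

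There is essentially no obstacle here beyond carefully tracking the arithmetic; the only place where subtlety enters is noting that $\tfrac{4^{t+1}-1}{3}$ is an integer and is odd (via its geometric-series expansion), so that $u$ is legitimately an odd integer to which the rule $T(n)=3n+1$ may be applied. Once that is in place, the rest is an algebraic identity together with the observation that $3k+2$ is odd exactly when $k$ is odd.
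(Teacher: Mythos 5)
Your proof is correct and follows essentially the same route as the paper's: compute $p=T(u)=3u+1=2\cdot 4^t(3k+2)$, observe that $3k+2$ is odd when $k$ is odd, and conclude both the divisibility and that $v=3k+2$. Your added check that $u$ itself is odd (via the geometric-series expansion of $\tfrac{4^{t+1}-1}{3}$) is a small piece of due diligence the paper leaves implicit, but it does not change the argument.
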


\begin{proof}
    We see that $p=T(u)=6\cdot 4^tk+4^{t+1}=4^t(6k+4)=2\cdot4^t(3k+2)$. Since $k$ is odd and $3k+2 \not\equiv 0 (\text{mod }2)$, we obtain $2\cdot4^t \mid p$ and $v=3k+2$.
\end{proof}

In what follows, we have three special cases of Theorem 2.13 and two examples that demonstrate the use of this theorem.

\begin{case}
    Let $t=1$. Then, $u=8k+5 \implies 8 \mid p$ and $v=3k+2$ so long as $k$ is odd.
\end{case}

\begin{case}
    Let $t=2$. Then, $u=32k+21 \implies 32 \mid p$ and $v=3k+2$ so long as $k$ is odd.
\end{case}

\begin{case}
    Let $t=3$. Then, $u=2\cdot4^3k+85 \implies 128 \mid p$ and $v=3k+2$ so long as $k$ is odd.
\end{case}

\begin{example}
    Let $u=13=8\cdot1+5$ and $p=T(u)=40$. Then, $8 \mid p$ and $v=3\cdot1+2=5$.
\end{example}

\begin{example}
    Let $u=53=32\cdot1+21$ and $p=160$. Then, $32 \mid p$ and $v=3\cdot1+2=5$.
\end{example}

While Theorem 2.13 explores the case when $k$ is odd, the following theorem covers the case when $k$ is even.

\begin{theorem}
    Let $u=2\cdot4^tk+\frac{4^{t+1}-1}{3}$. If $k$ is even and $k=2l$ for some $l \in \mathbb{N}$, then we have the following cases:
    \begin{enumerate}
        \item If $l$ is even, then $4^{t+1} \mid p$ and $v=3l+1$.
        \item If $l$ is odd, let $2^e \mid (3l+1)$ and $2^{e+1} \nmid (3l+1)$. Then, $2^{2t+e+2} \mid p$ and $v=\frac{3l+1}{2^e}$.
    \end{enumerate}
\end{theorem}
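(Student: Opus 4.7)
The plan is to mirror the proof of Theorem 2.13 exactly: apply $T$ to $u$, substitute $k=2l$, and then factor the resulting expression for $p$ as a pure power of $2$ times an odd number, so as to read off both the divisibility claim and the value of $v$. Since Theorem 2.10 guarantees $4 \mid p$, the odd number $v$ immediately following $p$ in the Collatz sequence is obtained by stripping all factors of $2$ from $p$; hence everything reduces to computing the exact $2$-adic valuation of $p$.

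The first step is essentially free. From the calculation already performed in the proof of Theorem 2.13 we have $p = T(u) = 6 \cdot 4^t k + 4^{t+1}$, so substituting $k=2l$ gives
$$p \;=\; 12 \cdot 4^t l + 4^{t+1} \;=\; 4^{t+1}(3l+1),$$
which already exposes $4^{t+1} = 2^{2t+2}$ as a factor of $p$ regardless of the parity of $l$. What remains is a case split driven by whether $l$ is even or odd, i.e., whether $3l+1$ is odd or even.

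For Case~1, when $l$ is even, $3l+1$ is odd, so the factorization above is already in power-of-$2$-times-odd form, and I would immediately conclude $4^{t+1} \mid p$ and $v = 3l+1$. For Case~2, when $l$ is odd, $3l+1$ is even; letting $2^e$ be the exact power of $2$ dividing $3l+1$, I would rewrite
$$p \;=\; 2^{2t+e+2}\cdot \frac{3l+1}{2^e}$$
with the right-hand factor odd, yielding $2^{2t+e+2} \mid p$ and $v = (3l+1)/2^e$. The hard part, if there is one, is simply the $2$-adic bookkeeping: once one sees that $p$ factors cleanly as $4^{t+1}(3l+1)$, the only subtlety is tracking how many further halvings are hidden inside $3l+1$ when $l$ is odd, which is exactly what the parameter $e$ is designed to record.
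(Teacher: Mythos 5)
Your proposal is correct and follows essentially the same route as the paper's proof: both compute $p=T(u)=4^{t+1}(3l+1)$ after substituting $k=2l$, then split on the parity of $l$ to read off the exact power of $2$ and the odd part $v$. No gaps.
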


\begin{proof}
    We let $p = 2\cdot4^t(3k+2)=2\cdot4^t(6l+2)=4\cdot4^t(3l+1)=4^{t+1}(3l+1)$.
    \begin{enumerate}
        \item If $l$ is even, then $3l+1 \not\equiv 0 (\text{mod } 2)$. Hence, $4^{t+1} \mid p$ and $v=3l+1$.
        \item If $l$ is odd, then let $2^e \mid (3l+1)$ and $2^{e+1} \nmid (3l+1)$. We have $p=T(u)=4^{t+1}\cdot2^e\cdot \frac{3l+1}{2^e}$. Thus, $2^{2t+e+2} \mid p$ and $v=\frac{3l+1}{2^e}$.
    \end{enumerate}
\end{proof}

Just as we did for Theorem 2.13, we provide the following special cases and examples for Theorem 2.19.

\begin{case}
    Let $t=1$. Then, $u=8k+5$ and $k=2l$.
    \begin{enumerate}
        \item If $l$ is even, then $16 \mid p$ and $v=3l+1$.
        \item If $l$ is odd, let $2^e \mid (3l+1)$ and $2^{e+1} \nmid (3l+1)$. Then, $2^{4+e} \mid p$ and $v=\frac{3l+1}{2^e}$.
    \end{enumerate}
\end{case}

\begin{case}
    Let $t=2$. Then, $u=32k+21$ and $k=2l$.
    \begin{enumerate}
        \item If $l$ is even, then $64 \mid p$ and $v=3l+1$.
        \item If $l$ is odd, let $2^e \mid (3l+1)$ and $2^{e+1} \nmid (3l+1)$. Then, $2^{6+e} \mid p$ and $v=\frac{3l+1}{2^e}$.
    \end{enumerate}
\end{case}

\begin{case}
    If $\frac{3l+1}{2^e}=1$, then $3l+1=2^e \implies l=\frac{2^e-1}{3}$. Hence, $l=1$ if $e=2$, $l=5$ if $e=4, \cdots $; thus, $$u=2\cdot 4^t(2l)+\frac{4^{t+1}-1}{3}=\frac{1}{3}[4^{t+1}\cdot2^e-1]=\frac{1}{3}[4^{t+1}\cdot4^m-1]=\frac{1}{3}[4^a-1],$$ where $e=2m$ is a nonzero even integer and $a=t+1+m \in \mathbb{N}$. Hence, if $u$ is an odd number such that $u=\frac{1}{3}[4^a-1]$, then $p=T(u)=4^a$ and $v=1$. Note that $u$ will reach 1 in $2a+1$ steps.
\end{case}

\begin{example}
    Let $t=1$, $u=21=8\cdot2+5$, $k=2$ and $p=64$. Then, $2^e \mid (3l+1) \implies 2^e \mid 4$ and $2^{e+1} \nmid 4$. Thus, $e=2$ and $2^{2t+2+e}=2^6=64 \mid p$. Lastly, $v=\frac{3l+1}{2^e}=1$.
\end{example}

\begin{example}
    Let $t=1$, $u=37=8\cdot4+5$, $k=4$ and $p=112$. Then, $2^e \mid (3l+1) \implies 2^e \mid 7$ and $2^{e+1} \nmid 7$. Thus, $e=0$ and $2^{2t+2+e}=2^4=16 \mid p$. Lastly, $v=\frac{3l+1}{2^e}=7$.
\end{example}

In what follows, we present a general result with some accompanying examples for a certain form in $u$ and its respective peak values.

\begin{theorem}
    Let $u=4^t k+\frac{4^t-1}{3}$. If $k$ is even (i.e., $k=2l$ for some $l \in \mathbb{N}$) and $t \in \mathbb{N}$, then $4^t \mid p$ and $v=6l+1$.
\end{theorem}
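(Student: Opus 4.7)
The plan is to verify this by direct computation, essentially mirroring the structure of the proof of Theorem 2.19. The key observation is that once we know $u$ is odd, the Collatz map gives $p = 3u+1$ and everything else follows by algebraic simplification together with the parity of $6l+1$.

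First I would confirm that $u$ is odd, so that the rule $p = T(u) = 3u+1$ applies. Writing $\frac{4^t-1}{3} = 1 + 4 + 4^2 + \cdots + 4^{t-1}$ shows that this integer has exactly one odd summand (namely the leading $1$) and $t-1$ even summands, so $\frac{4^t-1}{3}$ is odd for every $t \in \mathbb{N}$. Since $4^t k$ is even, $u$ is odd.

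Next I would carry out the arithmetic:
\begin{equation*}
p = T(u) = 3u + 1 = 3 \cdot 4^t k + (4^t - 1) + 1 = 4^t(3k+1).
\end{equation*}
Substituting $k = 2l$ yields $p = 4^t(6l+1)$. Since $6l+1$ is odd and $4^t = 2^{2t}$, the largest power of $2$ dividing $p$ is exactly $2^{2t}$, so in particular $4^t \mid p$. Finally, to identify $v$ (the odd number immediately after $p$ in the Collatz sequence), one applies $T$ to $p$ repeatedly, halving $2t$ times, which peels off the factor $4^t$ and leaves the odd value $6l+1$; hence $v = 6l+1$.

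There is no real obstacle here: the entire proof is a parity check followed by a one-line calculation. The only subtlety to watch is making sure that the hypothesis $t \in \mathbb{N}$ is used (it guarantees $4^t$ is even and $\frac{4^t-1}{3}$ is a well-defined odd integer), and that the conclusion $v = 6l+1$ relies on $6l+1$ being odd, which is automatic regardless of the parity of $l$.
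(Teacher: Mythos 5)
Your proposal is correct and follows essentially the same route as the paper: compute $p = T(u) = 3\cdot 4^t k + 4^t = 4^t(3k+1) = 4^t(6l+1)$ and observe that $6l+1$ is odd, so $4^t \mid p$ and the odd value reached after halving is $v = 6l+1$. Your added checks (that $u$ is odd and that $2t$ halvings peel off exactly the factor $4^t$) are details the paper leaves implicit, but the argument is the same.
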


\begin{proof}
    We have $p=T(u)=3k(4^t)+4^t=4^t(3k+1)=4^t(6l+1)$. Since $6l+1 \not\equiv 0 (\text{mod } 2)$ (i.e., $2 \nmid (6l+1)$), $4^t \mid p$ and $v=6l+1$.
\end{proof}

\begin{example}
    Consider the case when $t=1$. Then, $u=4k+1=8l+1$ and $p=4(6l+1) \implies 4 \mid p$ and $v=6l+1$. Suppose $u=65=8(8)+1$. Then, $l=8$ and $T(u)=4(6\cdot 8+1)=196$. That is, $4 \mid 196$, and $v=6l+1=49$.
\end{example}

\begin{example}
    Consider the case when $t=2$. Then, $u=4^2k+5=32l+5$ and $p=4^2(6l+1) \implies 4^2 \mid p$ and $v=6l+1$. Suppose $u=101=32(3)+5$. Then, $l=3$, $p=4^2(6l+1)=16(6\cdot3+1)=16\cdot19=304$, and $v=6l+1=19$.
\end{example}

\begin{example}
    Consider the case when $t=3$. Then, $u=4^3k+21=128l+21$ and $p=4^3(6l+1) \implies 4^3 \mid p$ and $v=6l+1$. Suppose $u=149=128(1)+21$. Then, $l=1$, $64 \mid p$, and $v=6l+1=7$.
\end{example}

The following theorem is similar to Theorem 2.24; however, it explores the cases that arise when $k$ is odd, specifically when $l$ is odd and when $l$ is even. Four accompanying examples are provided.

\begin{theorem}
    Let $u=4^tk+\frac{4^t-1}{3}$ and $k$ be odd (i.e., $k=2l+1$ for some $l \in \mathbb{N}$). 
    
    \begin{enumerate}
        \item If $l$ is odd, then $2^{2t+1} \mid p$ and $v=3l+2$.
        \item Suppose $l$ is even, and let $2^e \mid (3l+2)$ and $2^{e+1} \nmid (3l+2)$. Then, $2^{2t+e+1} \mid p$ and $v=\frac{3l+2}{2^e}$.
    \end{enumerate}
\end{theorem}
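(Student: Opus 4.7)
The plan is to mirror the structure of the proofs of Theorems 2.13 and 2.19 by computing $p=T(u)$ directly and then tracking the 2-adic factorization in each case. First I would observe that $u$ is odd: since $\frac{4^t-1}{3}=1+4+\cdots+4^{t-1}$ is odd and $4^t k$ is even (for $t\ge 1$), the sum $u$ is odd, so the Collatz map applies as $T(u)=3u+1$. Expanding,
\[
p=T(u)=3\!\left(4^t k+\tfrac{4^t-1}{3}\right)+1 = 3\cdot 4^t k + 4^t = 4^t(3k+1).
\]
Substituting the assumed form $k=2l+1$ gives $3k+1=6l+4=2(3l+2)$, and hence
\[
p = 4^t\cdot 2(3l+2) = 2^{2t+1}(3l+2),
\]
which is the identity that drives both cases.

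For part (1), I would note that if $l$ is odd, then $3l$ is odd and $3l+2$ is odd, so $2^{2t+1}\mid p$ with $2^{2t+2}\nmid p$. Dividing out all factors of 2 from $p$ leaves an odd number, which by definition is the first odd number $v$ encountered after $p$; therefore $v=p/2^{2t+1}=3l+2$. For part (2), if $l$ is even then $3l+2$ is even, and the hypothesis that $2^e\mid(3l+2)$ with $2^{e+1}\nmid(3l+2)$ is precisely the 2-adic valuation of $3l+2$. Writing $3l+2=2^e\cdot\frac{3l+2}{2^e}$ with the second factor odd, I substitute into the boxed identity to get
\[
p = 2^{2t+1}\cdot 2^e\cdot \tfrac{3l+2}{2^e} = 2^{2t+e+1}\cdot\tfrac{3l+2}{2^e},
\]
so $2^{2t+e+1}\mid p$ and the next odd number in the sequence is $v=\frac{3l+2}{2^e}$, as claimed.

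There is no real obstacle here: the argument is a direct computation followed by a parity split, exactly parallel to the one carried out in Theorem 2.19. The only point that requires a moment of care is justifying that peeling off all powers of 2 from $p$ indeed produces the next odd number $v$ in the Collatz trajectory, but this is immediate since the Collatz map applied to an even number simply halves it, so iterating $T$ from $p$ halves repeatedly until the odd part is exposed.
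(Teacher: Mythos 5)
Your proposal is correct and follows essentially the same route as the paper's proof: compute $p=T(u)=4^t(3k+1)=2^{2t+1}(3l+2)$ and then split on the parity of $l$ to extract the exact power of $2$. The only difference is that you add two small justifications the paper leaves implicit (that $u$ is odd, and that repeatedly halving $p$ yields the next odd term $v$), which is a welcome but minor refinement.
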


\begin{proof}

We use a similar proofing technique as we used for Theorem 2.19.

\begin{enumerate}
    \item We have that $p=T(u)=3\cdot 4^t k+4^t=3\cdot 4^t(2l+1)+4^t=4^t(6l+4)=2\cdot4^t(3l+2)=2\cdot 2^{2t}(3l+2)=2^{2t+1}(3l+2)$. Since $2 \nmid (3l+2)$, we must have that $2^{2t+1} \mid p$ and $v=3l+2$.
    \item From (1), $p=T(u)=2\cdot 4^t(3l+2)$. Since $l$ is even, $2^e \mid (3l+2)$, and $2^{e+1} \nmid (3l+2)$, we have $p=T(u)=2\cdot4^t\cdot2^e\left( \frac{3l+2}{2^e}  \right)=2^{2t+e+1}\left( \frac{3l+2}{2^e}  \right)$. Thus, $2^{2t+e+1} \mid p$ and $v=\frac{3l+2}{2^e}$.
\end{enumerate}
\end{proof}

\begin{example}
    Let $t=1$ in Theorem 2.28.1. Then, $u=4k+\frac{4-1}{3}=4k+1$. Suppose $k=3$ and $l=1$. Then, $u=13$ and $p=T(u)=40$. Here, we have $2^{2t+1}=8 \mid p$ and $v=3l+2=5$.
\end{example}

\begin{example}
    Let $t=2$ in Theorem 2.28.1. Then, $u=16k+\frac{4^2-1}{3}=16k+5$. Suppose that we still have $k=3$ and $l=1$. Then, $u=53$ and $p=T(u)=160$. Here, we have $2^{2t+1}=32 \mid p$ and $v=3l+2=5$. 
\end{example}

\begin{example}
    Let $t=1$ in Theorem 2.28.2. Then, $u=4k+1$. Let $l=4$ and $k=9$. Then, $u=37$ and $p=T(u)=112$. Now, $2^e|(3l+2) \implies 2^e \mid 14$, so $e=1$. Hence, $2^{2t+e+1}=16$ and $16 \mid 112$. Lastly, $v=\frac{3l+2}{2^e}=7$.
\end{example}

\begin{example}
    Let $t=2$ in Theorem 2.28.2. Then, $u=16k+5$. Let $l=4$ and $k=9$. Then, $u=149$ and $p=T(u)=448$. Now, $2^e|(3l+2) \implies 2^e \mid 14$, so once again $e=1$. Hence, $2^{2t+e+1}=64$ and $64 \mid 448$. Lastly, $v=\frac{3l+2}{2^e}=7$.
\end{example}

While these theorems and examples are valuable insights into Collatz dynamics, there are certainly more relationships to uncover. Further peak value analysis can be executed using algorithm 3 in section 3.

In an effort to make progress in the proof of the Collatz conjecture, relationships such as those previously described should be explored. These relationships decrease the amount of positive integers that need to be checked against the Collatz conjecture. If more relationships were to be discovered that cover the entire space of $\mathbb{N}$, then the Collatz conjecture would be proven. To aid the exploration of such relationships, three novel algorithms are provided in the following section.

\section{The Algorithms}\label{results}
The provided theorems and corollaries serve as a great introduction to a family of mathematical relationships that may cover all of $\mathbb{N}$. There are undoubtedly more relationships to discover pertaining to the Collatz conjecture. It is our hope that the three algorithms that follow will aid in the discovery of such relationships and, consequently, the proof of the conjecture.

\subsection{Visualizing the Collatz Conjecture}

Most attempts to visualize the Collatz conjecture come in the form of inverse binary trees. For example, in Fig. 1, the \textit{left} parent of a node is the even integer that results in the child node, and the \textit{right} parent of a node, if it exists, is the odd integer that results in the child node. Some nodes, such as ``8", only have a left parent node: 16. Note that there does not exist an odd integer $n$ such that $3n+1=8$; therefore, ``8" does not have a right parent. However, ``16" has both a left and a right parent node because $T(32)=16$ and $T(5)=16$.

Algorithm 1 provides a framework for building a visualization tool, where one can construct a custom, interactive interface for navigating an inverse binary tree similar to that in Fig. 1. Such an algorithm can facilitate efficient exploration of the properties of the Collatz conjecture. 

Console output formatting is subject to the reader's taste. Note that the for-loops cycle through 10 values. This number of cycles was arbitrarily chosen and can be changed without affecting the accuracy of the algorithm. The code used to generate Fig. 1 is available as discussed in the \textit{code availability} section at the end of the manuscript.

\begin{algorithm}
	\caption{Inverse Binary Tree Generation}
	\begin{algorithmic}[1]
		
		\STATE Obtain integer $n$ for the current child node
		\WHILE{input does not fail}
		
		\FOR{(int $i=0; i < 9; i=i+1$)}
		\STATE Output $n*pow(2,i+1)$
		\ENDFOR
		
		\IF{right parent exists for $n$}
		\STATE Output $\frac{n-1}{3}$
		\FOR{(int $i=0; i < 9; i=i+1$)}
		\STATE Output $\frac{n-1}{3}*pow(2,i+1)$
		\IF{right parent exists for $m=\frac{n-1}{3}$}
		\STATE Output $\frac{m-1}{3}$
		\ENDIF
		\ENDFOR
		\ENDIF
		
		\STATE Enter a new value $n$ for a child node
		
		\ENDWHILE
		
	\end{algorithmic}
\end{algorithm}

\subsection{Analyzing Iterations of the Collatz Conjecture}

As stated in the previous section, a rigorous study of relationships regarding the Collatz conjecture will aid in this conjecture's proof. While much attention has been placed on proving the Collatz conjecture for subsets of $\mathbb{N}$, we propose an alternative route: analyzing the number of iterations it takes for some positive integer $n$ to either (1) reach 1 or (2) reach a value less than itself ($<n$).

Algorithm 2 generates output containing the number of iterations each value $n$ requires to reach 1, the number of iterations until it reaches a value less than $n$, and the representation of $n$ in bases 2-4. An analysis of this output for various subsets of $\mathbb{N}$ may result in novel discoveries that can ignite new lines of attack to prove this conjecture. For example, some positive integers such as 381,727 and 2,788,008,987 require 282 and 729 iterations, respectively, to reach a value lower than itself. Perhaps an analysis of the properties of these numbers in various bases will render fruitful discussion. An example of output from this algorithm is shared in Table 4.

\begin{algorithm}
	\caption{Iteration Analysis}
	\begin{algorithmic}[1]
		
		\STATE Obtain the maximum number for the algorithm to consider: $M$
		
		\FOR{int $i=3;i\le n;i=i+2$}
		
		\STATE Set counters $c_0=0$ and $c_1=0$, and set $i_0=i$
		\WHILE{$i \ne 1$}
		
		\IF{$i\%2==0$}
		\STATE Set $i=i/2$
		\ELSE[$i\%2 \ne 0$]
		\STATE Set $i=3*i+1$
		\ENDIF
		
		\STATE Increase $c_0=c_0+1$
		\IF{$i< i_0$ \&\& $c_1==0$}
		\STATE Set $c_1=c_0$
		\ENDIF
		
		\ENDWHILE
		
		\STATE Output $i$, $c_0$, $c_1$
		\STATE Convert $i$ to bases 2, 3, 4, then output
		
		\ENDFOR
		
	\end{algorithmic}
\end{algorithm}

This algorithm can be easily modified to obtain the maximum number of iterations required to reach 1 for all natural numbers up to $M$, which we refer to as the starting number since it is specified at the beginning of the algorithm. The output can then be analyzed to determine how the number of iterations increases as $M$ increases. Fig. 4 denotes this relationship up to $M=3$ billion, which appears logistic in nature; this provides hope that the conjecture is true, since it appears that the number of iterations converges to a finite number as $M$ increases.

\begin{table}[]
	\centering
	\caption{Output from Algorithm 2}
	\begin{tabular}{@{}cccccc@{}}
		\toprule
		\textbf{N}  & \textbf{Iterations Until 1} & \textbf{Iterations Until $<$ N} & \textbf{Base 2} & \textbf{Base 3} & \textbf{Base 4} \\ \midrule
		\textbf{3}  & 7                           & 6                             & 11              & 10              & 3               \\
		\textbf{5}  & 5                           & 3                             & 101             & 12              & 11              \\
		\textbf{7}  & 16                          & 11                            & 111             & 21              & 13              \\
		\textbf{9}  & 19                          & 3                             & 1001            & 100             & 21              \\
		\textbf{11} & 14                          & 8                             & 1011            & 102             & 23              \\
		\textbf{13} & 9                           & 3                             & 1101            & 111             & 31              \\
		\textbf{15} & 17                          & 11                            & 1111            & 120             & 33              \\
		\textbf{17} & 12                          & 3                             & 10001           & 122             & 101             \\
		\textbf{19} & 20                          & 6                             & 10011           & 201             & 103             \\
		\textbf{21} & 7                           & 3                             & 10101           & 210             & 111             \\ \bottomrule
	\end{tabular}
\end{table}

\begin{figure}
	\caption{Alternative Output from Algorithm 2}
	\centering
	\includegraphics[scale=.8]{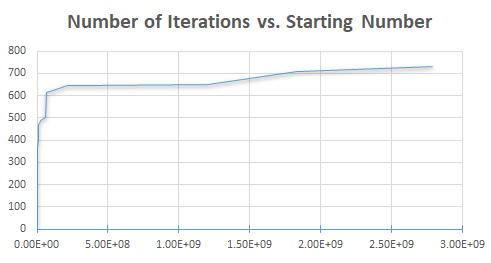}
\end{figure}

\subsection{Determining Peaks in Collatz Sequences}

It may be of interest to explore the highest value that a given natural number $n$ will reach before completing its Collatz sequence. For example, the sequence starting with $n=15$ has a maximum value (or, peak) of 160 before it reaches 1 on its 17th iteration. Perhaps an analysis of these peak numbers will reveal insights related to Collatz sequences and the conjecture. Algorithm 3 determines the peak values of the Collatz sequences starting at each odd natural number up to a given $N$, as well as the number of iterations it takes for each number to reach 1. An example of similar output can be found in tables in \cite{dcode}.

\begin{algorithm}
	\caption{Peak Analysis}
	\begin{algorithmic}[1]
		
		\STATE $\text{Obtain the maximum number for the algorithm to consider: }N$
		
		\FOR{$\text{int }i=3;i\le N;i=i+2$}
		
		\STATE $\text{Set tmp}=i\text{ and peak}=i,\text{ and set counter }c=0$
		\WHILE{$i \ne 1$}
		
		\IF{$\text{tmp}\%2==0$}
		\STATE $\text{Set tmp}=\text{tmp}/2$
		\ELSE[$\text{tmp}\%2 \ne 0$]
		\STATE $\text{Set tmp}=3*\text{tmp}+1$
		\ENDIF
		
		\IF{$\text{peak}<= \text{tmp}$}
		\STATE $\text{Set peak}=\text{tmp}$
		\ENDIF
		
		\STATE $c=c+1$
		
		\ENDWHILE
		
		\STATE $\text{Output }i, \text{peak}, c$
		
		\ENDFOR
		
	\end{algorithmic}
\end{algorithm}


In this algorithm, we restrict ourselves to positive odd integers up to 100,000. However, one can modify the for-loop in line 2 to include as large a natural number as necessary. One intriguing observation is that of the first 1539 odd natural numbers, for 407 (26.44\%) of them, the peaks occur at 9232. No other peak number has this high of a percentage, with the second most frequent peak in the 1\% range. It is not obvious if 9232 has any significance or if the peaks are somehow related. 

\section{Conclusion}

Although the proof of the Collatz conjecture seems intractable at this time, the contributions of various scholars from around the world are forming an excellent foundation for future approaches. Relationships that cover subsets of $\mathbb{N}$ decrease the amount of natural numbers that need to be checked against the conjecture. If enough relationships are provided, or if a generic relationship can be established, the proof of the Collatz conjecture will be within reach.

The novel theorems, corollaries, and algorithms presented in this manuscript reveal insightful relationships related to the Collatz conjecture. While we present a few relationships between subsets of $\mathbb{N}$, their peak values, and the conjecture, the delineated work should assist in the discovery of more relationships. Additionally, the algorithms presented here may serve as excellent tools to explore various properties between $\mathbb{N}$ and the Collatz conjecture.

\section{Code Availability}

All code used for this manuscript is publicly available at: 

\noindent https://github.com/michaelschwob/CollatzConjecture.

\section{Conflicts of Interest}

None of the authors have conflicting interests.

\section{Acknowledgments}

The authors would like to thank professor Curtis Cooper for providing references \cite{tao2019} and \cite{hartnett2019}, professor Aruhn Venkat for computational assistance, and the anonymous referees for their assistance in this publication. This article's preprint can be found in \cite{schwob2021preprint}.
	

\printbibliography
\end{document}